\title[weak-strong uniqueness for NS/AC system]{Weak-Strong Uniqueness for Navier-Stokes/Allen-Cahn system}
\author{Radim Ho\v sek}
\address{Radim Ho\v sek, Department of Mathematics, University of West Bohemia, Univerzitn\'i 8, 306 14 Plzen, Czech Republic,}
\email{hosek@math.cas.cz}
\author{V\'aclav M\'acha}
\address{V\'aclav M\'acha, Institute of Mathematics of the Czech Academy of Sciences, \v Zitn\'a 25, 115 67 Praha 1, Czech Republic}
\email{macha@math.cas.cz}
\thanks{We would like to express our thanks to Amy Novick-Cohen for a fruitfull discussion. Research of V. M. was supported by GA\v CR project GA17-01747S and RVO: 67985840.}
\newcommand\dd{\, \mathrm{d}}
\newcommand\ess{\mathrm{ess}}
\newcommand\en{\textbf{n}}
\newcommand\vu{\textbf{u}}
\newcommand\vU{\textbf{U}}
\newcommand\vv{\textbf{v}}
\newcommand{\di}{\mathrm{div}_x}
\newcommand{\diver}{\mathrm{div}}
\newcommand{\intQT}{\int\limits_0^t \!\!\!\int_\Omega}
\newcommand{\intQtau}{\int\limits_0^\tau \!\!\!\int_\Omega}
\newtheorem{thm}{Theorem}[section]
\newtheorem{lemma}[thm]{Lemma}
\newtheorem{prop}[thm]{Proposition}
\theoremstyle{definition}
\begin{document}

\numberwithin{equation}{section}
\begin{abstract}
The coupled Navier-Stokes/Allen-Cahn system is a simple model to describe phase separation in two-component systems interacting with an incompressible fluid flow. We demonstrate the \emph{weak-strong uniqueness} result for this system in a bounded domain in three spatial dimensions which implies that when a strong solution exists then a weak solution emanating from the same data coincides with the strong solution on its whole life-span. The proof of given assertion relies on a form of a relative entropy method.
\end{abstract}
\subjclass[2010]{35A02, 35B65}
\keywords{Allen-Cahn system, weak-strong uniqueness}
\maketitle

\section{Introduction}

Given a bounded Lipschitz domain $\Omega\subset \mathbb R^3$ and a time $T>0$, let us consider the following Navier-Stokes/Allen-Cahn system
\begin{eqnarray}
\label{EQmom}
\vu_t + \di (\vu \otimes \vu) + \nabla_x p &=& \di \mathbb{S}(\nabla_x \vu) - \varepsilon \di (\nabla_x c \otimes \nabla_x c),\\
\label{EQdivu}
\di \vu &=& 0,\\
\label{EQconc}
c_t + \vu \cdot \nabla_x c &=& \varepsilon \Delta_x c - \frac{1}{\varepsilon} F'(c).
\end{eqnarray}
on $Q_T:=(0,T)\times \Omega$ in conjection with the Dirichlet boundary value for velocity, i.e.
\begin{equation}\label{BCvu}
\vu |_{\partial \Omega} = 0,
\end{equation}
and Neumann boundary condition for concentration
\begin{equation}\label{BCgc}
\nabla_x c \cdot \mathbf{n}|_{\partial \Omega} = 0,
\end{equation}
which emanates from the initial conditions
\begin{equation}\label{ICvu}
\vu(0,\cdot) = \vu_0(\cdot), \qquad  c(0,\cdot) = c_0(\cdot)
\end{equation}

The system has three unknowns $\vu:Q_T\mapsto \mathbb R^3,\ p:Q_T\mapsto \mathbb R$ and $c:Q_T\mapsto \mathbb R$ which represent velocity, pressure and concentration respectively. 
Here, $\mathbb S$, the stress tensor, and $F$, the energy density,  are prescribed functions satisfying assumptions outlined in Section \ref{HAKR}.

The existence of a weak solution  to the above system was claimed (without a proof) in \cite{Zhao} assuming $\vu_0\in W^{1,2}_{0,\diver}$ and $c_0\in W^{2,2}_n$. A proof may readily be constructed as in \cite{linliu}. The uniqueness of a weak solution is still an open problem.

On the other hand, strong solution is regular enough to ensure its uniqueness. The existence of a strong solution emanating from $\vu_0\in W^{1,2}_{0,\diver}(\Omega)$, $c\in W^{2,2}_n$ on a short-time interval can also be proven as in \cite{linliu}.

The precise form of the above-mentioned existence results are stated in Section~\ref{HAKR}.

The main aim of this paper is to prove the weak-strong uniqueness for the above system, namely, if a (unique) strong solution exists, all weak solutions emanating from the same initial condition must be equal to the strong one. 

We introduce a relative entropy functional which measures distance between a weak solution and an arbitrary sufficiently smooth function, and demonstrate that the relative entropy functional satisfies a relative entropy inequality which allows us to conclude that the distance between any weak solution and the strong solution is zero so long as the strong solution exists. The relative entropy method was apparently first introduced by Dafermos \cite{dafer}. The weak-strong uniqueness property was proven for the compressible isentropic fluids using this method by Germain \cite{germain} and for the full Navier-Stokes-Fourier by Feireisl and Novotn\'y \cite{FeNo}. To the best of our knowledge, a relative entropy functional for the Navier-Stokes/Cahn-Hillard system is presented here for the first time.

Relative entropy functional provides a means of distance between a weak solution of a given problem and a sufficiently smooth functions. In our case, these functions will be a strong solution of the same problem. Another use of the relative entropy method is proving the singular limits; in such case, the smooth functions would be a solution of a target system (see e.g. \cite{BrKrMa}, \cite{DuNe}, \cite{FeKlNoZa} and many others). The relative entropy functional for the Navier-Stokes/Allen-Cahn system is introduced by relation \eqref{REfunctional}.

The paper is organized as follows. In the next section we state hypotheses and we recall some known results. The relative entropy functional is defined and the relative entropy inequality is derived in Section \ref{sec:REI}. The last Section contains our main claim and its proof.

\subsection{Notation}
Standard Lebesque, Sobolev and Bochner spaces are denoted by $L^p(\Omega)$, $W^{k,p}(\Omega)$ and $L^p(0,T,X)$ respectively. We use this notation for both real- and vector-valued function. Further, we introduce the following notation
\begin{equation*}
L^2_{div}(\Omega) : = \overline{ \{ \phi \in C^{\infty}_0(\Omega), \di \phi = 0 \} }^{\|.\|_{L^2}},
\end{equation*}
\begin{equation*}
W^{1,2}_{0,\diver}(\Omega) : = W^{1,2}_0(\Omega) \cap L^2_{\diver}(\Omega), 
\end{equation*}
\begin{equation*}
W^{s,2}_n(\Omega):=  \overline{ \{ \phi \in C^{\infty}(\Omega), \nabla \phi  \cdot \en |_{\partial \Omega}= 0 \} }^{\|.\|_{W^{s,2}}}, \qquad s\in \mathbb N.
\end{equation*}

\section{Hypothesis and known results}
\label{HAKR}
The stress tensor is assumed to satisfy the standard linear constitutive relation,	

\begin{equation*}
\mathbb{S}(\nabla_x \vu) = \frac{\nu}{2} (\nabla_x \vu + \nabla_x^T \vu),\qquad \nu>0.
\end{equation*}
$F$ is taken to be a double-well potential with two local minimizers $y_1,y_2$ which satisfies
\begin{equation}\label{eF}
F \in C^{1,1}[f_1,f_2], \qquad  -\infty<f_1 <y_1 < y_2 < f_2<\infty.
\end{equation}
Further, we assume that the initial condition $c_0$ satisfies
\begin{equation}\label{ICc}
[\ess \inf_\Omega c_0,\ess\sup_\Omega c_0] \subseteq [f_1,f_2].
\end{equation}

\subsection{Energy balance}
Proceeding formally, we multiply  (\ref{EQmom}) by $\vu$ and (\ref{EQconc}) by $(c_t + \vu \cdot \nabla_x c)$, then integrate over space to get

\begin{equation*}
\frac{1}{2} \frac{\dd}{\dd t} \int_\Omega |\vu|^2 \dd x + \nu \int_\Omega \mathbb{S}(\nabla_x \vu):\nabla_x \vu \dd x= \varepsilon\int_\Omega (\nabla_x c \otimes \nabla_x c) : \nabla_x \vu \dd x,
\end{equation*}
and
\begin{equation*}
-\int_\Omega |c_t + \vu \cdot \nabla_x c|^2 \dd x= \frac{\varepsilon}{2} \frac{\dd}{\dd t} \int_\Omega |\nabla_x c|^2 \dd x + \frac{1}{\varepsilon} \frac{\dd}{\dd t} \int_\Omega F(c) \dd x - \varepsilon \int_\Omega (\nabla_x c \otimes \nabla_x c) : \nabla_x \vu \dd x.
\end{equation*}
Combining the above equalities yield the following dissipation equation
\begin{equation}\label{energy_diss}
\frac{\dd}{\dd t} \int_\Omega \left( \frac{1}{2} |\vu|^2 + \frac{\varepsilon}{2} |\nabla_x c|^2 + \frac{1}{\varepsilon} F(c)  \right) \dd x + \int_\Omega \left( \mathbb{S}(\nabla_x \vu):\nabla_x \vu + |c_t + \vu \cdot \nabla_x c|^2 \right) \dd x = 0.
\end{equation}

\subsection{Weak formulation}

We say that the couple $(\vu, c)$

\begin{equation}\label{wf_vu}
\vu \in L^\infty\left(0,T;L^2_{div}(\Omega)\right) \cap L^2\left(0,T; W^{1,2}_{0,div}(\Omega)\right),
\end{equation}
\begin{equation}\label{wf_c}
c \in L^\infty\left(0,T;W^{1,2}_n(\Omega)\right) \cap L^2\left(0,T; W^{2,2}_n(\Omega)\right),
\end{equation}
is a weak solution to the Navier-Stokes/Allen-Cahn system, if it satisfies
\begin{equation}\label{WF_mom}
\begin{split}
\int_\Omega \vu(t) \cdot \vv(t) \dd x - \int_{\Omega} \vu_0 \cdot \vv(0) - \intQT \vu\cdot \vv_t \dd x\dd t - \intQT (\vu \otimes \vu):\nabla_x \vv \dd x\dd t \\ +\intQT \mathbb{S}(\nabla_x \vu):\nabla_x \vv \dd x \dd t = \varepsilon \intQT (\nabla_x c \otimes \nabla_x c): \nabla_x\vv \dd x \dd t,
\end{split}
\end{equation}
for all $\vv \in C^\infty(Q_T)$ such that $\vv = 0$ on $(0,T)\times\partial \Omega$ and $\di \vv = 0$ for a.e. $t \in [0,T]$,

\begin{equation}\label{WF_conc}
c_t + \vu \cdot \nabla_x c = \varepsilon \Delta_x c - \frac{1}{\varepsilon} F'(c),
\end{equation}
almost everywhere in $\Omega \times [0,T]$, as well as the energy inequality

\begin{equation}\label{en_ineq}
\begin{split}
&  \int_\Omega \left( \frac{1}{2} |u(t)|^2 + \frac{\varepsilon}{2} |\nabla_x c|^2 + \frac{1}{\varepsilon} F(c) \right)(x,t) \dd x \\ & \quad + \intQT  \left( \mathbb{S}(\nabla_x \vu) : \nabla_x \vu + |c_t + \vu \cdot \nabla_x c|^2 \right) \dd x \dd t \\ & \quad \leq \int_\Omega \left( \frac{1}{2} |u_0|^2 + \frac{\varepsilon}{2} |\nabla_x c_0|^2 + \frac{1}{\varepsilon} F(c_0) \right) \dd x, 
 \end{split}
\end{equation}
for a.e. $t \in [0,T]$. 

We emphasise that the weak solution is supposed to fulfill the energy inequality rather than equality. The energy dissipation \eqref{energy_diss} was derived just formally and it holds only for sufficiently smooth solutions. Unfortunately, \eqref{wf_vu} does not allow to multiply \eqref{EQmom} by $\vu$.

Since we are considering an incompressible fluid, the pressure does not appear in the weak formulation. However, it can be reconstructed by standard techniques, see for example \cite{sohr}.

\subsection{Existence}

\begin{thm}[\cite{Zhao}, Theorem 2.1]
Let $\vu_0 \in L^2_{div}(\Omega)$  and $c_0 \in W^{1,2}_n(\Omega)$ satisfy \eqref{ICc} and let \eqref{eF} hold. Then the system (\ref{EQmom}--\ref{ICvu}) possesses a global weak solution 
\begin{multline*}(\vu,c)\in \left(L^{2}(0,T,W^{1,2}_{0,\diver}(\Omega))\cap L^\infty(0,T,L^2(\Omega))\right)\\
\times \left(L^\infty(0,T,W^{1,2}(\Omega))\cap L^2(0,T,W^{2,2}(\Omega))\right).\end{multline*}
Moreover, if $\vu_0 \in W^{1,2}_{0,div}(\Omega)$ and $c_0 \in W^{2,2}_n(\Omega)$, then there exists $T^\star > 0$ such that (\ref{EQmom}--\ref{ICvu}) possesses a unique strong solution $(\vU,C)$ in $[0,T^\star)$ such that 

\begin{equation}\label{integr_vU}
\vU \in L^\infty\left(0,T;W^{1,2}_{0,div}(\Omega)\right) \cap L^2\left(0,T; W^{2,2}_{0,div}(\Omega)\right),
\end{equation}
\begin{equation}\label{integr_C}
C \in L^\infty\left(0,T;W^{2,2}_n(\Omega)\right) \cap L^2\left(0,T; W^{3,2}_n(\Omega)\right)
\end{equation}
holds for $T < T^\star$.
\end{thm}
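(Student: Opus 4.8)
The plan is to construct the weak solution by a Galerkin-type approximation combined with the energy estimate and compactness, following the scheme used for the structurally analogous Ericksen--Leslie system in \cite{linliu}. First I would fix an orthonormal basis $\{\mathbf{w}_k\}$ of $L^2_{div}(\Omega)$ consisting of eigenfunctions of the Stokes operator, seek the velocity approximation $\vu^m=\sum_{k=1}^m a^m_k(t)\mathbf{w}_k$ in the span of the first $m$ modes, and couple it to the full Allen--Cahn equation \eqref{EQconc} solved for $c^m$. Projecting \eqref{EQmom} onto the Galerkin space and retaining \eqref{EQconc} yields a system of ODEs coupled to a semilinear parabolic equation; solvability on each level follows from a fixed-point argument, or one may decouple (given $\vu$ solve for $c$, given $c$ solve the Navier--Stokes part) and iterate.

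The heart of the argument is the a priori bound furnished by the energy identity \eqref{energy_diss}, which holds with equality on the Galerkin level because the approximations are smooth enough to be tested against themselves. Integrating in time and using that the double-well $F$ is bounded below gives, uniformly in $m$,
\begin{equation*}
\|\vu^m\|_{L^\infty(L^2)\cap L^2(W^{1,2})} + \|\nabla_x c^m\|_{L^\infty(L^2)} + \|c^m_t+\vu^m\cdot\nabla_x c^m\|_{L^2(L^2)} \le K.
\end{equation*}
To make sense of $F'(c)$ and to close these bounds I would first establish a maximum principle keeping $c^m$ in $[f_1,f_2]$: testing the (suitably truncated) equation by $(c^m-f_2)_+$ and $(f_1-c^m)_+$ and invoking \eqref{ICc} shows the concentration never leaves $[f_1,f_2]$, so $F'(c^m)$ is bounded pointwise by the $C^{1,1}$ hypothesis \eqref{eF}. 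Rewriting \eqref{EQconc} as $\varepsilon\Delta_x c^m = (c^m_t+\vu^m\cdot\nabla_x c^m)+\tfrac1\varepsilon F'(c^m)$ with right-hand side bounded in $L^2(L^2)$, elliptic regularity for the Neumann Laplacian then upgrades the concentration to the claimed $c^m\in L^2(W^{2,2}_n)$.

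With these uniform bounds I would extract weakly-$*$ convergent subsequences and pass to the limit. The only genuinely nonlinear terms are the convective term $\vu\otimes\vu$, the transport $\vu\cdot\nabla_x c$, the capillary stress $\nabla_x c\otimes\nabla_x c$, and $F'(c)$. Strong convergence of $\vu^m$ in $L^2(L^2)$ follows from Aubin--Lions once $\partial_t\vu^m$ is bounded in a negative-order space via \eqref{EQmom}, while strong convergence of $c^m$ and, crucially, of $\nabla_x c^m$ in $L^2(L^2)$ follows by interpolating the bound in $L^2(W^{2,2})\cap L^\infty(W^{1,2})$ against the control of $c^m_t$ in $L^2(L^2)$. \emph{The main obstacle is precisely this compactness of the gradient} $\nabla_x c^m$: without it the quadratic stress $\nabla_x c\otimes\nabla_x c$ and the product $\vu\cdot\nabla_x c$ cannot be identified in the limit. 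Once strong (hence a.e.) convergence of $c^m$ and $\nabla_x c^m$ is secured, continuity and boundedness of $F'$ dispatch the potential term, and weak lower semicontinuity of the convex dissipation terms turns the Galerkin energy equality into the energy inequality \eqref{en_ineq}.

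For the second assertion I would start from the smoother data $\vu_0\in W^{1,2}_{0,div}$, $c_0\in W^{2,2}_n$ and derive higher-order estimates on the same approximations. Testing the momentum equation by the Stokes operator applied to $\vu$ controls $\|\nabla_x\vu\|_{L^\infty(L^2)}$ and $\|\vu\|_{L^2(W^{2,2})}$, while differentiating the Allen--Cahn equation and testing appropriately yields $c\in L^\infty(W^{2,2}_n)\cap L^2(W^{3,2}_n)$; the coupling terms are absorbed through Sobolev embeddings in three dimensions. These estimates contain a superlinear feedback (a convective contribution of the type $\|\nabla_x\vu\|_{L^2}^6$ in three dimensions), so the resulting differential inequality closes only on a maximal interval $[0,T^\star)$, which produces the time $T^\star>0$ of the statement. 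Uniqueness of the strong solution then follows from a Gronwall estimate on the difference of two such solutions, the extra regularity \eqref{integr_vU}--\eqref{integr_C} rendering every nonlinear difference term controllable by the dissipation.
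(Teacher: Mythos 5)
This theorem is not proved in the paper at all: it is imported from \cite{Zhao}, with the remark that a proof ``may readily be constructed as in \cite{linliu}.'' Your Galerkin--energy--compactness outline (with the truncation/maximum-principle step to control $F'(c)$, Aubin--Lions plus interpolation for the gradient compactness, and local-in-time higher-order estimates for the strong solution) is precisely the Lin--Liu route the authors point to, so it is consistent with the approach the paper intends.
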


The uniqueness of the strong solution is thus known. As mentioned before, our aim is to show uniqueness of the strong solution in the class of weak solutions. The integrability properties, which allow to use the relative entropy inequality, are
\begin{equation}\label{integr_vuc}
\vU \in L^2(0,T; L^\infty(\Omega)^3), \quad \nabla_x C \in L^2(0,T;L^\infty(\Omega)).
\end{equation}
which follows from (\ref{integr_vU}--\ref{integr_C}) and the Sobolev embedding theorem  in 3 dimensions. As (\ref{integr_vuc}) does not hold in general for weak solutions, we are not able to show the uniqueness of weak solutions.
  
\subsection{Weak maximum principle on the concentration}

Both strong and weak solutions of the Allen-Cahn equation with convection satisfy the maximum (and minimum) principle as specified in the following Proposition.

\begin{prop}[Weak maximum principle]
Let $(\vu,c)$ satisfy \eqref{wf_vu}, \eqref{wf_c}) and \eqref{WF_conc} almost everywhere in $Q_T$  with boundary condition \eqref{BCgc} and initial condition $c(0,\cdot) = c_0(\cdot)$ fulfilling \eqref{ICc}. Then for a.e. $(x,t) \in Q_T$,

\begin{equation*}
c(x,t) \in [m,M] := \mathrm{co}\left\{\ess \inf_{x\in \Omega} c_0, \ess \sup_{x \in \Omega} c_0,y_1,y_2\right\},
 \end{equation*} 
 where $y_1, y_2$ are the local minimizers of $F$. 
\end{prop}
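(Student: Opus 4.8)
The plan is to prove the two one-sided bounds $c\le M$ and $c\ge m$ separately, where $M:=\max\{\ess\sup_\Omega c_0,y_2\}$ and $m:=\min\{\ess\inf_\Omega c_0,y_1\}$, since the convex hull in the statement is exactly the interval $[m,M]$. I focus on the upper bound; the lower bound follows by an entirely symmetric computation. The decisive structural observation is that, because $y_2$ is the larger of the two local minimizers of the double-well potential $F$, one has $F'(s)\ge 0$ for every $s\ge y_2$ (in particular for $s\ge M\ge y_2$), while $F'(s)\le 0$ for every $s\le y_1$; this is precisely what forces the admissible range to be pushed out so as to contain the wells $y_1,y_2$ rather than merely $[\ess\inf_\Omega c_0,\ess\sup_\Omega c_0]$, on which $F'$ has no definite sign.

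First I would test the concentration equation \eqref{WF_conc} with the truncation $w:=(c-M)^+=\max\{c-M,0\}$. Since $s\mapsto(s-M)^+$ is Lipschitz, $w$ inherits the regularity \eqref{wf_c} of $c$, with $\nabla_x w=\nabla_x c\,\mathbf{1}_{\{c>M\}}$. Multiplying \eqref{WF_conc} by $w$ and integrating over $\Omega$, I treat the four resulting terms as follows. The time term produces $\int_\Omega c_t\,w\dd x=\tfrac12\frac{\dd}{\dd t}\int_\Omega|w|^2\dd x$. The convective term vanishes: writing $\vu\cdot\nabla_x c\,w=\tfrac12\vu\cdot\nabla_x|w|^2$ and integrating by parts, the boundary contribution drops out because $\vu|_{\partial\Omega}=0$, while the interior contribution vanishes because $\di\vu=0$. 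The diffusion term, after integration by parts and using the Neumann condition \eqref{BCgc} to kill the boundary integral, equals $-\varepsilon\int_\Omega|\nabla_x w|^2\dd x\le 0$. Finally, on the set $\{c>M\}$ we have $c>M\ge y_2$, hence $F'(c)\ge 0$ and $F'(c)\,w\ge 0$, so the reaction term $-\tfrac1\varepsilon\int_\Omega F'(c)\,w\dd x\le 0$.

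Collecting these identities yields the differential inequality $\frac{\dd}{\dd t}\int_\Omega|w|^2\dd x\le 0$. Since $c_0\le\ess\sup_\Omega c_0\le M$ by \eqref{ICc}, the initial datum satisfies $w(0,\cdot)=(c_0-M)^+=0$, and therefore $\int_\Omega|w(t)|^2\dd x=0$ for a.e.\ $t$, i.e.\ $c\le M$ a.e.\ in $Q_T$. Repeating the argument with the truncation $(m-c)^+$ and the sign $F'(s)\le 0$ for $s\le y_1$ gives $c\ge m$, which completes the proof.

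The step I expect to be most delicate is the rigorous justification of the time term, namely the identity $\int_\Omega c_t\,(c-M)^+\dd x=\tfrac12\frac{\dd}{\dd t}\|(c-M)^+\|_{L^2(\Omega)}^2$. The function $(c-M)^+$ is only Lipschitz, not smooth, so it is not directly an admissible test function, and from \eqref{WF_conc} the derivative $c_t=\varepsilon\Delta_x c-\tfrac1\varepsilon F'(c)-\vu\cdot\nabla_x c$ enjoys only limited integrability; the identity must therefore be obtained through the chain rule for the Gelfand-triple time derivative together with a smooth approximation of $s\mapsto(s-M)^+$ and a passage to the limit. A companion technical point is that $F'(c)$ must be given a meaning where $c$ is not yet known to lie in $[f_1,f_2]$; this is handled by extending $F'$ to a globally Lipschitz function preserving the one-sided signs at the wells, after which the derived bounds $[m,M]\subseteq[f_1,f_2]$ show the extension was immaterial.
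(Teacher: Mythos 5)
Your proof is correct and follows essentially the same route as the paper: testing \eqref{WF_conc} with the truncation at the relevant endpoint, killing the convective term via solenoidality and the no-slip condition, discarding the nonnegative diffusion term, and using the sign of $F'$ outside the wells (the paper's ``monotonicity of $F$'') to discard the reaction term. The paper presents only the lower bound via $(c-m)^-$ and states the other case is analogous, whereas you detail the upper bound; the extra care you take with the chain rule for the time term and the extension of $F'$ is a refinement of, not a departure from, the paper's argument.
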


\begin{proof}
We show the minimum principle only, the latter inequality is analogous. By assumptions on $c_0$ and $F$, we have $m >-\infty$ and we   test (\ref{WF_conc}) by $(c-m)^-$ to get

\begin{equation*}
\frac{1}{2} \int_\Omega \frac{\dd}{\dd t} \left((c-m)^-\right)^2 \dd x+ \varepsilon \int_\Omega |\nabla_x (c-m)^-|^2 \dd x + \frac{1}{\varepsilon} \int_\Omega F'(c) (c-m)^- \dd x = 0.
 \end{equation*}
As the second and third integrals in the above equation have positive sign (monotonicity of $F$), we recover that $((c-m)^-)^2 = 0$ for a.a. $t\in [0,T]$ and, consequently, $c(x,t) \geq m$ for a.e. $(x,t) \in Q_T$. 
\end{proof}

\section{Relative entropy}\label{sec:REI}
\subsection{Relative entropy functional}

The particular form of the relative entropy functional reads
\begin{equation}\label{REfunctional}
\begin{split}
\mathcal{E}(\vu,c | \vU,C)(t) = \int_\Omega \left( \frac{1}{2}|\vu  - \vU|^2 + \frac{\varepsilon}{2}| \nabla_x (c - C)|^2  \right)(x,t) \dd x. 
\end{split}
\end{equation}

To prove that the relative entropy functional indeed measures a distance of two solutions, we need to show that it does not give zero value for $c \neq C$. This is a direct consequence of the following lemma.

\begin{lemma}\label{L:poinc} Let $\Omega$ be a bounded Lipschitz domain $T>0$, let $F$ satisfy \eqref{eF} and $c_0$ satisfy \eqref{ICc}. Let $\vu_1,\ \vu_2\in L^{2}(0,T,W^{1,2}_{0,\diver}(\Omega))$, $c_1,\ c_2$ satisfy \eqref{wf_c}. There exists $K$ independent on $\vu_1,\ c_1,\ \vu_2$ and $c_2$, such that if $c_1$ and $c_2$ are solutions to \eqref{WF_conc} with corresponding velocities emanating from $c_0$, then
\begin{equation*}
\int_\Omega (c_1-c_2)^2(t) \dd x \leq K  \intQT\left( (\nabla_x (c_1-c_2))^2 + (\vu_1 - \vu_2)^2 \right) \dd x, 
\end{equation*}
holds for a.e. $t \in [0,T]$. 
\end{lemma}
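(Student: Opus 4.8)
The plan is to set $r := c_1 - c_2$, subtract the two copies of \eqref{WF_conc}, and test the resulting difference equation by $r$ itself. Since both concentrations emanate from the same datum $c_0$, we have $r(0,\cdot)=0$, so $y(t):=\int_\Omega r^2(t)\dd x$ starts at zero, and the goal is a bound of the form $y(t)\le K\int_0^t(\|\nabla_x r\|_{L^2}^2+\|\vu_1-\vu_2\|_{L^2}^2)\dd\tau$. Writing the transport difference as $\vu_1\cdot\nabla_x c_1-\vu_2\cdot\nabla_x c_2=\vu_1\cdot\nabla_x r+(\vu_1-\vu_2)\cdot\nabla_x c_2$, the equation for $r$ reads $r_t+\vu_1\cdot\nabla_x r+(\vu_1-\vu_2)\cdot\nabla_x c_2=\varepsilon\Delta_x r-\frac1\varepsilon(F'(c_1)-F'(c_2))$.

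Multiplying by $r$ and integrating over $\Omega$, I would treat the terms one by one. The time derivative gives $\frac12\frac{\dd}{\dd t}y(t)$ (legitimate because the regularity \eqref{wf_c} makes $t\mapsto y(t)$ absolutely continuous and justifies the spatial integrations by parts, by a standard Lions--Magenes argument). The transport term $\int_\Omega(\vu_1\cdot\nabla_x r)\,r\dd x$ vanishes by incompressibility $\di\vu_1=0$ together with $\vu_1|_{\partial\Omega}=0$. The diffusion term yields $-\varepsilon\int_\Omega|\nabla_x r|^2\dd x$, the boundary contribution vanishing by the Neumann condition \eqref{BCgc}. Since $F\in C^{1,1}[f_1,f_2]$, its derivative is Lipschitz with some constant $L$, so the potential term is bounded by $\frac{L}{\varepsilon}\int_\Omega r^2\dd x=\frac{L}{\varepsilon}y(t)$; here the weak maximum principle of the preceding Proposition keeps both $c_1,c_2$ inside $[m,M]\subseteq[f_1,f_2]$, so evaluating $F'$ on this interval is admissible.

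The only delicate term is the cross term $I:=\int_\Omega((\vu_1-\vu_2)\cdot\nabla_x c_2)\,r\dd x$, and this is where the main obstacle lies. A direct H\"older estimate that pulls out $\|\nabla_x c_2\|_{L^2}$ and an $L^\infty$ bound on $r$ produces a term proportional to $\|\vu_1-\vu_2\|_{L^2}$ with a constant prefactor, which after integration in time leaves a spurious additive constant that does \emph{not} vanish with the right-hand side, and is therefore useless for concluding $c_1=c_2$. The remedy is to integrate $I$ by parts so as to move the gradient off $c_2$: since $\di(\vu_1-\vu_2)=0$ and $\vu_1-\vu_2$ has zero trace on $\partial\Omega$, the boundary term drops and $I=-\int_\Omega c_2\,(\vu_1-\vu_2)\cdot\nabla_x r\dd x$. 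Now the weak maximum principle bounds $c_2$ in $L^\infty(Q_T)$ by a constant $C_\infty$ depending only on $c_0$ and the minimizers $y_1,y_2$, whence Cauchy--Schwarz and Young give $|I|\le C_\infty\|\vu_1-\vu_2\|_{L^2}\|\nabla_x r\|_{L^2}\le \frac{\varepsilon}{2}\|\nabla_x r\|_{L^2}^2+\frac{C_\infty^2}{2\varepsilon}\|\vu_1-\vu_2\|_{L^2}^2$. The first piece is absorbed into the diffusion term on the left, and the second is exactly of the admissible form.

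Collecting everything gives $\frac12 y'(t)+\frac{\varepsilon}{2}\|\nabla_x r\|_{L^2}^2\le\frac{C_\infty^2}{2\varepsilon}\|\vu_1-\vu_2\|_{L^2}^2+\frac{L}{\varepsilon}y(t)$. Integrating in time, using $y(0)=0$ and discarding the nonnegative gradient term on the left, I obtain $y(t)\le\frac{C_\infty^2}{\varepsilon}\int_0^t\|\vu_1-\vu_2\|_{L^2}^2\dd\tau+\frac{2L}{\varepsilon}\int_0^t y(\tau)\dd\tau$. Since $\tau\mapsto\frac{C_\infty^2}{\varepsilon}\int_0^\tau\|\vu_1-\vu_2\|_{L^2}^2$ is nondecreasing, Gr\"onwall's inequality yields $y(t)\le\frac{C_\infty^2}{\varepsilon}\,\ee^{2Lt/\varepsilon}\int_0^t\|\vu_1-\vu_2\|_{L^2}^2\dd\tau$. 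Bounding $t\le T$ and enlarging the right-hand side by the nonnegative gradient term gives the claim with $K:=\frac{C_\infty^2}{\varepsilon}\ee^{2LT/\varepsilon}$, which depends only on $\Omega$, $T$, $\varepsilon$, $F$ and the maximum-principle bounds, hence is independent of the particular solutions.
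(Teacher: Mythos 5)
Your proof is correct and follows essentially the same route as the paper: test the difference of the two concentration equations by $c_1-c_2$, integrate the convective term by parts so that the gradient lands on $c_1-c_2$ and only $\|c_2\|_{L^\infty}$ and $\|\vu_1-\vu_2\|_{L^2}$ appear, use the Lipschitz bound on $F'$, and conclude by Gronwall. The only (harmless) differences are that you obtain the $L^\infty$ bound on $c_2$ from the maximum principle rather than from the integrability of $c_2$, and that you keep the dissipative term $-\varepsilon\|\nabla_x(c_1-c_2)\|_{L^2}^2$ with its correct sign on the left and use it for absorption, whereas the paper's displayed identity carries it with the opposite sign on the right-hand side.
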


As a consequence, if $\mathcal{E}(\vu,c|\vU,C)(t) = 0$ for a.e. $t \in [0,T]$, then $c = C$ a.e in $\Omega \times [0,T]$. 

\begin{proof}
We take the difference of the equations (\ref{WF_conc}) for the two solutions, test by $(c_1-c_2)$ and integrate over $(0,\tau)\subset(0,T)$ to obtain
\begin{equation}\label{poinc_pf1}
\begin{split}
& \int_\Omega \frac{1}{2}(c_1-c_2)^2(\tau) \dd x = - \int_0^\tau\int_\Omega (c_1-c_2) (\vu_1 \cdot \nabla_x c_1 - \vu_2 \cdot \nabla_x c_2) \dd x \dd t \\ & \quad + \varepsilon \int_0^\tau\int_\Omega |\nabla_x (c_1-c_2)|^2 \dd x \dd t + \frac{1}{\varepsilon} \int_0^\tau\int_\Omega (F'(c_1)-F'(c_2))(c_1-c_2) \dd x \dd t.
\end{split}
\end{equation}

First, we focus on the difference of the convective terms. Integration by parts together with boundary conditions and solenoidality of $\vu_1, \vu_2$ yield
\begin{multline}\label{poinc_pf2}
- \int_0^\tau\int_\Omega (c_1-c_2) (\vu_1 \cdot \nabla_x c_1 - \vu_2 \cdot \nabla_x c_2) \dd x \dd t \\  \quad   =  \int_0^\tau\int_\Omega c_1\vu_2\cdot \nabla_x c_2 \dd x \dd t +   \int_0^\tau\int_\Omega c_2\vu_1\cdot \nabla_x c_1 \dd x \dd t =  - \int_0^\tau\int_\Omega c_1 (\vu_1 -\vu_2 ) \cdot \nabla_x c_2 \dd x \dd t  \\  \quad = - \int_0^\tau\int_\Omega (c_1-c_2) (\vu_1 -\vu_2 ) \cdot \nabla_x c_2 \dd x \dd t   =  \int_0^\tau\int_\Omega c_2 (\vu_1 -\vu_2 ) \cdot \nabla_x (c_1-c_2) \dd x \dd t.
\end{multline}
Since $F'$ is Lipschitz, the last term on the right hand side of \eqref{poinc_pf1} can be estimated as
\begin{equation}\label{lipschitz_treatment}
\frac{1}{\varepsilon} \int_0^\tau\int_\Omega (F'(c_1)-F'(c_2))(c_1-c_2) \dd x \dd t \leq \frac{L_{F'}}{\varepsilon} \int_0^\tau\int_\Omega  (c_1-c_2)^2 \dd x \dd t.
\end{equation}
 
We use \eqref{poinc_pf1}, \eqref{poinc_pf2}, \eqref{lipschitz_treatment} and the Cauchy-Schwarz inequality in order to deduce
\begin{equation*}
\begin{split}
& \int_\Omega \frac{1}{2}|c_1-c_2|^2(\tau) \dd x \leq  \int_0^\tau \left( (\ess \sup_{\Omega} \frac{1}{2}|c_2|^2 + \varepsilon) \int_\Omega |\nabla_x (c_1-c_2)|^2 \dd x \right) \dd t \\ & \quad + \frac{1}{2} \int_0^\tau\int_\Omega  |\vu_1 -  \vu_2|^2    \dd x \dd t + \frac{L_{F'}}{\varepsilon} \int_0^\tau\int_\Omega |c_1-c_2|^2 \dd x \dd t.
\end{split}
\end{equation*}
The integrability properties of $(\vu_i,c_i)$ allow to apply the Gronwall inequality which concludes the proof.
\end{proof}

\subsection{Relative entropy inequality}
\begin{prop}
Let $(\vU, C)$ be a strong solution to (\ref{EQmom}--\ref{ICvu}). Then the relative entropy functional satisfies the following relative entropy inequality

\begin{equation}\label{REI}
\begin{split}
& \mathcal E(u,c|U,C)(t) + \intQT \mathbb{S}(\nabla(\vu - \vU)):\nabla_x (\vu - \vU) \dd x \dd t \\& \quad + \intQT |(c_t + \vu \cdot \nabla_x c) - (C_t + \vU \cdot \nabla_x C)|^2  \dd x \dd t  \\ & \quad - \int_\Omega \left( \frac{1}{2}|\vu_0 - \vU_0|^2 + \frac{\varepsilon}{2}| \nabla_x (c_0 - C_0)|^2 \right) \dd x  
 \\ &  \leq \intQT  ((\vu - \vU)  \otimes \vU): \nabla_x (\vu- \vU) \dd x \dd t \\ & \quad
+ \varepsilon \intQT \Delta_x(c-C) \vU \cdot \nabla_x(c-C) \dd x \dd t  \\ & \quad  + \varepsilon \intQT \nabla_x C \otimes \nabla_x (c-C) : \nabla_x (\vu-\vU) \dd x \dd t  \\ & \quad + \varepsilon \intQT \nabla_x (c-C) \otimes \nabla_x C : \nabla_x (\vu-\vU) \dd x \dd t  \\ & \quad + \varepsilon \intQT \Delta_x (c-C)   (\vu-\vU) \cdot  \nabla_x C \dd x \dd t \\ & \quad 
-\frac{1}{\varepsilon} \intQT ( F'(c) - F'(C))\times \left((c_t + \vu \cdot \nabla_x c) - (C_t + \vU \cdot \nabla_x C) \right) \dd x \dd t. 
\end{split}
\end{equation}
\end{prop}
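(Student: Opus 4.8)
The plan is to assemble \eqref{REI} out of three ingredients: the energy inequality \eqref{en_ineq} satisfied by the weak solution $(\vu,c)$, the corresponding energy \emph{equality} for the strong solution $(\vU,C)$ (it holds with equality because \eqref{integr_vU}--\eqref{integr_C} supply enough regularity to legitimately test \eqref{EQmom} by $\vU$ and \eqref{EQconc} by $C_t+\vU\cdot\nabla_x C$, so $(\vU,C)$ obeys \eqref{energy_diss} integrated on $(0,t)$), and the evolution of the terms coupling the two solutions. The point of departure is the algebraic identity
\begin{equation*}
\mathcal E(\vu,c|\vU,C)(t) = \int_\Omega\left(\tfrac12|\vu|^2 + \tfrac\varepsilon2|\nabla_x c|^2\right)\dd x + \int_\Omega\left(\tfrac12|\vU|^2 + \tfrac\varepsilon2|\nabla_x C|^2\right)\dd x - \int_\Omega\left(\vu\cdot\vU + \varepsilon\nabla_x c\cdot\nabla_x C\right)\dd x,
\end{equation*}
whose first two brackets are the kinetic-plus-gradient parts of the two energies. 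Since the potential $\tfrac1\varepsilon F$ is \emph{absent} from $\mathcal E$, I would add and subtract $\tfrac1\varepsilon\int_\Omega F(c)\dd x$ and $\tfrac1\varepsilon\int_\Omega F(C)\dd x$, turning those brackets into full energies (to which \eqref{en_ineq} and its strong counterpart apply) at the cost of the bookkeeping terms $-\tfrac1\varepsilon\int_\Omega(F(c)+F(C))\dd x$, which I rewrite by the chain rule as $-\tfrac1\varepsilon\intQT(F'(c)c_t+F'(C)C_t)\dd x\dd t$ up to the initial data.

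Next I would compute the increment of the mixed term $\int_\Omega(\vu\cdot\vU+\varepsilon\nabla_x c\cdot\nabla_x C)\dd x$ between $0$ and $t$. For the velocity part I use the weak momentum formulation \eqref{WF_mom} with the admissible test function $\vv=\vU$ (justified by a density argument, as $\vU$ is divergence-free, vanishes on $\partial\Omega$ and has the regularity \eqref{integr_vU}); this expresses $\int_\Omega\vu(t)\cdot\vU(t)\dd x$ through the initial value and space-time integrals of $\vu\cdot\vU_t$, $(\vu\otimes\vu):\nabla_x\vU$, $\mathbb{S}(\nabla_x\vu):\nabla_x\vU$ and $\varepsilon(\nabla_x c\otimes\nabla_x c):\nabla_x\vU$. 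The leftover $\intQT\vu\cdot\vU_t\dd x\dd t$ I evaluate by substituting $\vU_t$ from the strong equation \eqref{EQmom}: the pressure drops out because $\di\vu=0$ and $\vu|_{\partial\Omega}=0$ (cf. \eqref{BCvu}), and integration by parts converts the viscous and capillary terms into $-\mathbb{S}(\nabla_x\vU):\nabla_x\vu$ and $\varepsilon(\nabla_x C\otimes\nabla_x C):\nabla_x\vu$.

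For the concentration part I would differentiate $\varepsilon\int_\Omega\nabla_x c\cdot\nabla_x C\dd x$ in time, integrate by parts using \eqref{BCgc} to move the gradients onto Laplacians, and substitute $c_t$ and $C_t$ from the two copies of \eqref{EQconc}. This produces the coupled convective-Laplacian contributions together with potential terms of the type $F'(c)\Delta_x C$ and $F'(C)\Delta_x c$, which, combined with the bookkeeping $F$-terms from the first step and repeated use of $c_t+\vu\cdot\nabla_x c=\varepsilon\Delta_x c-\tfrac1\varepsilon F'(c)$ (see \eqref{WF_conc}) and its strong analogue, must assemble into the single relative potential term $-\tfrac1\varepsilon\intQT(F'(c)-F'(C))((c_t+\vu\cdot\nabla_x c)-(C_t+\vU\cdot\nabla_x C))\dd x\dd t$ on the right of \eqref{REI}.

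Finally I would feed these two evolutions into the decomposition of $\mathcal E$, apply \eqref{en_ineq} (as an upper bound) to the weak energies and the strong energy equality to the others, and complete the squares in the dissipation: $\mathbb{S}(\nabla_x\vu):\nabla_x\vu+\mathbb{S}(\nabla_x\vU):\nabla_x\vU=\mathbb{S}(\nabla_x(\vu-\vU)):\nabla_x(\vu-\vU)+2\mathbb{S}(\nabla_x\vu):\nabla_x\vU$ and likewise $|c_t+\vu\cdot\nabla_x c|^2+|C_t+\vU\cdot\nabla_x C|^2=|(c_t+\vu\cdot\nabla_x c)-(C_t+\vU\cdot\nabla_x C)|^2+2(c_t+\vu\cdot\nabla_x c)(C_t+\vU\cdot\nabla_x C)$, so the relative dissipations land on the left of \eqref{REI} while the cross dissipations cancel against the mixed-term evolution. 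The genuinely hard part is this final bookkeeping: the many convective, viscous, capillary and potential contributions produced above are individually not of the desired difference form, and one must use solenoidality of $\vu,\vU$, the boundary conditions \eqref{BCvu}--\eqref{BCgc}, and repeated integration by parts to reorganize them \emph{exactly} into the five quadratic coupling terms on the right of \eqref{REI}. A secondary technical obstacle is the rigorous justification of the time integration by parts and of using $\vU$ as a test function at the regularity of a mere weak solution, handled by a standard mollification/density argument based on \eqref{integr_vU}--\eqref{integr_C}.
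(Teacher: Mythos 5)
Your plan is essentially the paper's proof in a different bookkeeping order: the paper sums the weak energy inequality, the weak momentum formulation tested with $-\vU$, the weak concentration equation multiplied by $-(C_t+\vU\cdot\nabla_x C)$, and the strong equations multiplied by $\vU-\vu$ and by the difference of material derivatives, which is algebraically the same as your ``two energies minus the evolution of the mixed terms, then complete the squares'' decomposition, and your identification of which cross terms cancel (viscous, capillary, potential) is correct. The one place where you defer to ``bookkeeping'' is where the paper does genuine work: after the cancellations the leftover capillary contribution contains the term $-\varepsilon\intQT\Delta_x C\,(\vu-\vU)\cdot\nabla_x(c-C)\dd x\dd t$, which is \emph{not} estimable with the available integrability (neither factor pairs with an $L^2(0,T;L^\infty)$ quantity), and the paper must perform a specific chain of three integrations by parts to convert it into the three terms $\mathcal R_{\varepsilon,2},\mathcal R_{\varepsilon,3},\mathcal R_{\varepsilon,4}$ appearing on the right of \eqref{REI}; your proposal would be complete once you carry out that rearrangement explicitly rather than asserting that the terms ``must assemble.''
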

The remainder of Section \ref{sec:REI} is devoted to a proof of (\ref{REI}). 

\subsection{Initial estimates}
In order to obtain (\ref{REI}), we note the following:

\begin{itemize}
\item the energy inequality (\ref{en_ineq}) which holds for weak solutions,
\begin{equation}\label{REIpf01}
\begin{split}
& \frac{1}{2} \int_\Omega |\vu|^2(x,t) \dd x + \frac{\varepsilon}{2} \int_\Omega |\nabla_x c|^2(x,t) \dd x + \frac{1}{\varepsilon} \int_\Omega F(c)(x,t) \dd x \\ & \quad + \intQT \mathbb{S}(\nabla_x \vu):\nabla_x \vu \dd x\dd t + \intQT |c_t + \vu \cdot \nabla_x c|^2 \dd x \dd t \\ & \quad \leq \frac{1}{2} \int_\Omega |\vu_0|^2 \dd x + \frac{\varepsilon}{2} \int_\Omega |\nabla_x c_0|^2 \dd x + \frac{1}{\varepsilon} \int_\Omega F(c_0) \dd x,
\end{split}
\end{equation}
\item the weak formulation of the momentum equation (\ref{WF_mom}) with $\vv  = -\vU$:
\begin{equation}\label{REIpf02}
\begin{split}
& - \int_\Omega (\vu\cdot \vU)(x,t) \dd x + \int_\Omega \vu_0 \cdot \vU_0 \dd x + \intQT \vu  \cdot \vU_t \dd x \dd t \\ & \quad + \intQT (\vu \otimes \vu): \nabla_x \vU \dd x \dd t - \intQT \mathbb{S}(\nabla_x \vu): \nabla_x \vU \dd x \dd t \\ & \quad = - \varepsilon \intQT (\nabla_x c \otimes \nabla_x c): \nabla_x \vU \dd x \dd t.
\end{split}
\end{equation}
Note, that the regularity of $\vU$ is sufficient. Indeed, $\vU$ can be approximated by smooth selenoidal functions which are allowed to be test functions in \eqref{WF_mom}. As all terms in \eqref{REIpf02} have sense, the convergence of corresponding integrals is a standard matter.
\item  The equation for concentration (\ref{WF_conc}) multiplied by $-(C_t + \vU\cdot \nabla_x C) $:
\begin{equation}\label{REIpf03}
\begin{split}
& - \intQT (c_t + \vu \cdot \nabla_x c)(C_t + \vU \cdot \nabla_x C) \dd x \dd t  \\ & \quad = \varepsilon \intQT \nabla_x c \cdot\nabla_x C_t \dd x \dd t + \varepsilon \intQT \nabla_x c \cdot \nabla_x (\vU \nabla_x C) \dd x \dd t \\ & \quad + \frac{1}{\varepsilon} \intQT F'(c) (C_t + \vU \cdot \nabla_x C) \dd x \dd t.
\end{split}
\end{equation}
\item The momentum equation (\ref{EQmom}) for the strong solution $(\vU,C)$ multiplied by $(\vU - \vu)$:
\begin{equation}\label{REIpf04}
\begin{split}
& \frac{1}{2} \int_\Omega |\vU|^2(x,t) \dd x - \frac{1}{2} \int_\Omega |\vU_0|^2 \dd x - \intQT \vU_t \cdot  \vu \dd x \dd t  + \intQT (\vU \otimes \vU) : \nabla_x \vu \dd x \dd t \\ & \quad + \intQT \mathbb{S}(\nabla_x \vU):\nabla_x (\vU - \vu)  \dd x \dd t - \varepsilon \intQT (\nabla_x C \otimes \nabla_x C): \nabla_x (\vU - \vu) \dd x \dd t = 0.
\end{split}
\end{equation}
\item The concentration equation (\ref{EQconc}) for strong solutions $(\vU,C)$ multiplied by $(C_t + \vU\cdot \nabla_x C) - (c_t + \vu \cdot \nabla_x c )$:
\begin{equation}\label{REIpf05}
\begin{split}
& \intQT |C_t + \vU \cdot \nabla_x C|^2 \dd x \dd t - \intQT (C_t + \vU \cdot \nabla_x C)(c_t + \vu \cdot \nabla_x c) \dd x \dd t \\& \quad = - \frac{\varepsilon}{2} \int_\Omega |\nabla_x C |^2 (x,t) \dd x + \frac{\varepsilon}{2} \int_\Omega |\nabla_x C_0|^2 \dd x  \\ & \quad  + \varepsilon \intQT \Delta_x C 
(\vU \cdot \nabla_x C) \dd x \dd t - \varepsilon \intQT \Delta_x C  ( c_t + \vu \cdot \nabla_x c) \dd x \dd t \\ & \quad   - \frac{1}{\varepsilon} \intQT F'(C)(C_t +  U \cdot \nabla_x C) \dd x  \dd t +  \frac{1}{\varepsilon} \intQT F'(C)( c_t + \vu\cdot \nabla_x c) \dd x \dd t.  
\end{split}
\end{equation}
\end{itemize}

\subsection{Formation of the left hand side of (\ref{REI})} 
We sum the relations (\ref{REIpf01}--\ref{REIpf05}). With the help of integration by parts we obtain

\begin{equation}\label{gather}
\begin{split}
& \int_\Omega \left( \frac{1}{2}|\vu - \vU|^2 + \frac{\varepsilon}{2}| \nabla_x (c - C)|^2 \right)(x,t) \dd x \\ & \quad + \intQT \mathbb{S}(\nabla(\vU - \vu)):\nabla_x (\vU - \vu) \dd x \dd t \\& \quad + \intQT |(c_t + \vu \cdot \nabla_x c) - (C_t + \vU \cdot \nabla_x C)|^2  \dd x \dd t  \\ & \quad - \int_\Omega \left( \frac{1}{2}|\vu_0 - \vU_0|^2 + \frac{\varepsilon}{2}| \nabla_x (c_0 - C_0)|^2 \right) \dd x   \leq \mathcal{R}. \end{split}
\end{equation}
The left hand side of (\ref{gather}) is the difference of the relative entropy functionals plus non-negative dissipation terms, while all the other terms were put to the right hand side $\mathcal{R}$. We can split $\mathcal{R}$ into three parts, 
\begin{equation*}
\mathcal{R} = \mathcal{R}_{conv} + \mathcal{R}_{\varepsilon} + \mathcal{R}_{F}.
\end{equation*}
$\mathcal{R}_{conv}$ contains the remaining convective terms, namely
\begin{equation}\label{Rconv}
\mathcal{R}_{conv} =  - \intQT (\vU \otimes \vU) : \nabla_x \vu \dd x \dd t  - \intQT (\vu \otimes \vu): \nabla_x \vU \dd x \dd t. 	
\end{equation} 
$\mathcal{R}_{\varepsilon}$ is a sum of all remaining terms, led by $\varepsilon$, i.e.,
\begin{equation*}
\begin{split}
& \mathcal{R}_\varepsilon = - \varepsilon \intQT (\nabla_x c \otimes \nabla_x c): \nabla_x \vU \dd x \dd t + \varepsilon \intQT \nabla_x c \cdot \nabla_x (\vU \cdot \nabla_x C) \dd x \dd t  \\ & \quad  +\varepsilon \intQT (\nabla_x C \otimes \nabla_x C): \nabla_x (\vU - \vu) \dd x \dd t + \varepsilon \intQT \Delta_x C  (\vU \cdot \nabla_x C) \dd x \dd t \\ & \quad - \varepsilon \intQT \Delta_x C (\vu \cdot \nabla_x c) .
\end{split} 
\end{equation*}
Finally, $\mathcal{R}_{F}$ is a sum of all terms led by $1/{\varepsilon}$:
\begin{equation}\label{RF}
\begin{split}
& \varepsilon \mathcal{R}_{F} =  - \int_\Omega F(c)(x,t) \dd x  + \int_\Omega F(c_0) \dd x + \intQT F'(c) (C_t + \vU \cdot \nabla_x C) \dd x \dd t \\ & \quad  -   \intQT F'(C)(C_t + \vU \cdot \nabla_x C) \dd x \dd t +  \intQT F'(C) (c_t + \vu \cdot \nabla_x c) \dd x \dd t.
\end{split}
\end{equation}

\subsection{Reformulation of the right hand side}
We treat $\mathcal{R}_{conv}$ first. Using the identities
\begin{equation*}
- \int_\Omega \vu \otimes \vu : \nabla_x\vU \dd x =  \int_\Omega  \vu \otimes \vU : \nabla_x \vu \dd x, \quad \int_\Omega  (\vu-\vU)\otimes \vU : \nabla_x \vU \dd x = 0 
\end{equation*}
we can rewrite (\ref{Rconv}) into 

\begin{equation}\label{REI_RHS_conv}
\mathcal{R}_{conv} = \intQT  ((\vu - \vU)  \otimes \vU): \nabla_x (\vu- \vU) \dd x \dd t. 
\end{equation}

We simplify $\mathcal{R}_{\varepsilon}$ using integration by parts, in particular, 

\begin{equation*}
\begin{split}
& \mathcal{R}_\varepsilon = \varepsilon \intQT  \Delta_x c \vU \cdot \nabla_x c \dd x \dd t - \varepsilon \intQT  \Delta_x c \vU \cdot \nabla_x C \dd x \dd t \\& \quad  - \varepsilon \intQT  \Delta_x C \vU \cdot \nabla_x C \dd x \dd t + \varepsilon \intQT  \Delta_x C \vu \cdot \nabla_x C \dd x \dd t \\ & \quad + \varepsilon \intQT  \Delta_x C \vU \cdot \nabla_x C \dd x \dd t - \varepsilon \intQT  \Delta_x C \vu \cdot \nabla_x c \dd x \dd t.  
\end{split} 
\end{equation*}
The third and the fifth terms cancel out, the remaining four are equal to
\begin{equation}\label{Reps_2int}
\begin{split}
& \mathcal{R}_\varepsilon =   \varepsilon \intQT \Delta_x(c-C) \vU \cdot \nabla_x(c-C) \dd x \dd t   \\   & \quad 
-  \varepsilon \intQT \Delta_x C (\vu-\vU) \cdot \nabla_x(c-C) \dd x \dd t.
\end{split}
\end{equation}
The latter integral in (\ref{Reps_2int}) is not in a form that enables its estimation. We reformulate it using three integrations by parts, in particular
\begin{equation}\label{Reps_3pp}
\begin{split}
& -  \varepsilon \intQT \Delta_x C  (\vu-\vU) \cdot \nabla_x(c-C)  \dd x \dd t \\ & \quad  = \varepsilon \intQT \nabla_x C \otimes \nabla_x (c-C) : \nabla_x (\vu-\vU) \dd x \dd t \\ & \qquad +  \varepsilon \intQT \nabla_x C \otimes (\vu-\vU) : \nabla_x \otimes \nabla_x (c-C)  \dd x \dd t \\ & \quad = \varepsilon \intQT \nabla_x C \otimes \nabla_x (c-C) : \nabla_x (\vu-\vU) \dd x \dd t \\& \qquad  -  \varepsilon \intQT \nabla_x (c-C) \otimes (\vu-\vU) : \nabla_x \otimes \nabla_x C  \dd x \dd t \\ & \quad = 
\varepsilon \intQT \nabla_x C \otimes \nabla_x (c-C) : \nabla_x (\vu-\vU) \dd x \dd t \\ & \qquad + \varepsilon \intQT \nabla_x (c-C) \otimes \nabla_x C : \nabla_x (\vu-\vU) \dd x \dd t  \\ & \qquad + \varepsilon \intQT \Delta_x (c-C)   (\vu-\vU) \cdot  \nabla_x C \dd x \dd t.
\end{split}
\end{equation}
We deduce from (\ref{Reps_2int}--\ref{Reps_3pp}) that
\begin{equation}\label{REI_RHS_eps}
\begin{split}
& \mathcal{R}_\varepsilon =   \varepsilon \intQT \Delta_x(c-C) \vU \cdot \nabla_x(c-C) \dd x \dd t   + \varepsilon \intQT \nabla_x C \otimes \nabla_x (c-C) : \nabla_x (\vu-\vU) \dd x \dd t  \\ & \quad + \varepsilon \intQT \nabla_x (c-C) \otimes \nabla_x C : \nabla_x (\vu-\vU) \dd x \dd t  \\ & \quad + \varepsilon \intQT \Delta_x (c-C)   (\vu-\vU) \cdot  \nabla_x C \dd x \dd t =: \sum_{j=1}^4 \mathcal{R}_{\varepsilon,j}.
\end{split}
\end{equation}
Finally, we treat $\mathcal{R}_F$. We write all terms in (\ref{RF}) as space-time integrals to get
\begin{equation}\label{REI_RHS_F}
\begin{split}
&  \mathcal{R}_{F} =  - \frac{1}{\varepsilon} \intQT F'(c)(c_t + \vu \cdot \nabla_x c) \dd x \dd t  +  \frac{1}{\varepsilon} \intQT F'(c) (C_t + \vU \cdot \nabla_x C) \dd x \dd t \\ & \quad  - \frac{1}{\varepsilon}  \intQT F'(C) C_t \dd x \dd t + \frac{1}{\varepsilon} \intQT F'(C) (c_t + \vu \cdot \nabla_x c) \dd x \dd t \\ & \quad = 
-\frac{1}{\varepsilon} \intQT ( F'(c) - F'(C))\times \left((c_t + \vu \cdot \nabla_x c) - (C_t + \vU \cdot \nabla_x C) \right) \dd x \dd t. 
\end{split}
\end{equation}
The desired inequality follows by combination of (\ref{gather}),(\ref{REI_RHS_conv}), (\ref{REI_RHS_eps}) and (\ref{REI_RHS_F}).

\section{Weak-strong uniqueness property}

\begin{thm}
Let $\vu_0 \in W^{1,2}_{0,div}(\Omega)^3$ and $c_0 \in W^{2,2}_n(\Omega)$ fulfill \eqref{ICc} and  $(\vu,c)$ be a weak solution and $(\vU,C)$ the strong solution to Navier-Stokes/Allen-Cahn system (\ref{EQdivu}--\ref{ICc}) both emanating from the same initial data $(\vu_0,c_0)$. Then on the life span $[0,T^\star)$ of the strong solution we have $(\vu,c) = (\vU,C)$.
\end{thm}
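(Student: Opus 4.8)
The plan is to substitute a weak solution $(\vu,c)$ and the strong solution $(\vU,C)$ into the relative entropy inequality \eqref{REI} and to close a Gronwall argument. Since both solutions emanate from the same data $(\vu_0,c_0)$, the initial term $\int_\Omega\left(\frac12|\vu_0-\vU_0|^2+\frac\varepsilon2|\nabla_x(c_0-C_0)|^2\right)\dd x$ vanishes, so \eqref{REI} reduces to $\mathcal E(\vu,c|\vU,C)(t)+D_1(t)+D_2(t)\le\mathcal R$, where $D_1:=\intQT\mathbb S(\nabla_x(\vu-\vU)):\nabla_x(\vu-\vU)\dd x\dd t$ and $D_2:=\intQT|g|^2\dd x\dd t$, with $g:=(c_t+\vu\cdot\nabla_x c)-(C_t+\vU\cdot\nabla_x C)$, are nonnegative. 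By the structure of $\mathbb S$ and Korn's inequality applied to $\vu-\vU\in W^{1,2}_{0,\diver}$, we have $D_1\ge c_K\intQT|\nabla_x(\vu-\vU)|^2\dd x\dd t$, so both dissipation terms are coercive and available to absorb part of the right-hand side.

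The heart of the argument is to bound every term of $\mathcal R$ by (an $L^1$-in-time coefficient) times $\mathcal E$, plus quantities absorbable into $D_1$ and $D_2$. I would use throughout that the strong solution enjoys $\vU\in L^2(0,T;L^\infty(\Omega))$ and $\nabla_x C\in L^2(0,T;L^\infty(\Omega))$ by \eqref{integr_vuc}, so that any coefficient $\|\vU\|_{L^\infty}^2$ or $\|\nabla_x C\|_{L^\infty}^2$ lies in $L^1(0,T)$. The convective term $\mathcal R_{conv}$ and the two gradient terms $\mathcal R_{\varepsilon,2},\mathcal R_{\varepsilon,3}$ are handled directly by Cauchy--Schwarz and Young: a factor $\|\nabla_x(\vu-\vU)\|_{L^2}^2$ is absorbed into $D_1$, while the remaining $\|\vu-\vU\|_{L^2}^2$ or $\|\nabla_x(c-C)\|_{L^2}^2$ is $\le C\mathcal E(t)$ with the stated $L^1$ coefficient.

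The genuinely delicate terms are the two containing $\Delta_x(c-C)$, namely $\mathcal R_{\varepsilon,1}$ and $\mathcal R_{\varepsilon,4}$, since $\Delta_x(c-C)$ is not controlled by $\mathcal E$. Here I would exploit the difference of the concentration equations, $\varepsilon\Delta_x(c-C)=g+\frac1\varepsilon(F'(c)-F'(C))$, to trade the Laplacian: the contribution carrying $g$ is estimated by Young with the $\|g\|_{L^2}^2$ piece absorbed into $D_2$, and the contribution carrying $F'(c)-F'(C)$ is controlled via the Lipschitz bound $|F'(c)-F'(C)|\le L_{F'}|c-C|$. This produces the factor $\|c-C\|_{L^2}^2$, which is not part of $\mathcal E$, and this is exactly where Lemma~\ref{L:poinc} enters, giving $\|c-C\|_{L^2}^2(t)\le K\intQT\left(|\nabla_x(c-C)|^2+|\vu-\vU|^2\right)\dd x\dd t\le C\int_0^t\mathcal E(s)\dd s$. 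The same Lipschitz/Poincar\'e mechanism disposes of $\mathcal R_F$. I expect this coupling---trading the uncontrolled Laplacian against the dissipation $D_2$ and the lower-order remainder against the time-integrated entropy---to be the main obstacle.

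Collecting the estimates and choosing the Young parameters small enough to absorb the $D_1,D_2$ contributions on the left, I arrive at $\mathcal E(t)\le\int_0^t a(s)\mathcal E(s)\dd s+\int_0^t b(s)\left(\int_0^s\mathcal E(r)\dd r\right)\dd s$ with $a,b\in L^1(0,T)$. Since $s\mapsto\int_0^s\mathcal E$ is nondecreasing, the double integral is bounded by $\|b\|_{L^1(0,T)}\int_0^t\mathcal E$, whence $\mathcal E(t)\le\int_0^t\left(a(s)+\|b\|_{L^1(0,T)}\right)\mathcal E(s)\dd s$ with an $L^1$ kernel and no additive constant. Gronwall's lemma then forces $\mathcal E(\vu,c|\vU,C)(t)=0$ for a.e. $t\in[0,T^\star)$. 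Consequently $\vu=\vU$ a.e., and by the consequence of Lemma~\ref{L:poinc} recorded after its statement, $c=C$ a.e. on $[0,T^\star)$, which is the assertion.
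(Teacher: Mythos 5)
Your proposal is correct and follows essentially the same route as the paper: the relative entropy inequality, weighted Young estimates absorbing $\nabla_x(\vu-\vU)$ and $g$ into the dissipation, elimination of $\Delta_x(c-C)$ via the difference of the concentration equations, Lemma~\ref{L:poinc} for the $\|c-C\|_{L^2}^2$ remainder, and Gronwall. You are in fact slightly more explicit than the paper about handling the iterated time integral produced by Lemma~\ref{L:poinc}, but the argument is the same.
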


\begin{proof}
The proof of the weak-strong uniqueness uses the Gronwall-type argument. Hence we aim at rewriting the relative entropy inequality in the form 

\begin{equation}\label{Gw_scheme}
\mathcal{E}(\tau) - \mathcal{E}_0 + D \leq \lambda D + k \int\limits_0^\tau \omega(s) \mathcal{E}(s) \dd s, 
\end{equation}

where
\begin{itemize}
\item $k>0$ is a (possibly large) constant, independent of time,
\item $D$ denotes the dissipative terms and $\lambda \in [0,1)$,
\item $\omega \in L^{1}[0,T]$ for all $T \in (0,T^\star)$,
\item (\ref{Gw_scheme}) holds for almost all $\tau \in [0,T^\star)$. 
\end{itemize}

After reaching (\ref{Gw_scheme}), one employs the Gronwall inequality to obtain the desired conclusion. Hence the whole proof reduces to showing (\ref{Gw_scheme}).

We focus on right hand side terms of (\ref{REI}). They all share similar structure: They are integrals of three factors, two of them being in a difference form. Roughly speaking, one of those differences is  $L^2(0,T;L^2(\Omega))$ integrable and is a part of the dissipation $D$. The latter difference is $L^\infty(0,T;L^2(\Omega))$ integrable and is a part of the relative entropy (\ref{REI}). Finally the third factor which is not a difference is  $L^2(0,T;L^\infty(\Omega))$ integrable. 
 
The strategy for reaching (\ref{Gw_scheme}) is to use weighted  Young's inequality with small weight to the first type factors to ensure $\lambda < 1$, while the large weight is kept with the latter. The latter integral contains two factors whose integrability properties match ideally. This is crucial to ensure that the condition on $\omega$ in the scheme (\ref{Gw_scheme}) is fulfilled. In particular, we have

\begin{equation}\label{WSU_est1}
\begin{split}
& |\mathcal{R}_{conv}| \leq \intQtau |\nabla_x (\vu - \vU)| |\vU | |\vu - \vU| \dd x \dd t  \\ & \quad \leq \frac{\delta}{2} \intQtau |\nabla_x (\vu - \vU)|^2 \dd x \dd t + \frac{1}{2\delta} \int\limits_0^\tau \left(\ess\sup_\Omega |\vU|^2 \int_\Omega |\vu - \vU|^2 \dd x \right)\dd t. 
\end{split}
\end{equation}

\begin{equation}\label{WSU_est2}
\begin{split}
& |\mathcal{R}_{\varepsilon,1}| \leq \varepsilon \intQtau |\Delta_x (c-C)||\vU| |\nabla_x (c-C)|  \dd x \dd t \\ & \quad 
\leq \frac{\delta \varepsilon}{2} \intQtau |\Delta_x (c-C)|^2 \dd x \dd t + \frac{\varepsilon}{2\delta} \int\limits_0^\tau \left(\ess\sup_\Omega |\vU|^2 \int_\Omega |\nabla_x (c-C)|^2 \dd x \right)\dd t.  
\end{split}
\end{equation}

\begin{equation}\label{WSU_est3}
\begin{split}
& |\mathcal{R}_{\varepsilon,2}| + |\mathcal{R}_{\varepsilon,3}| \leq 2\varepsilon\intQtau |\nabla_x (c-C)||\nabla_x C| |\vu-\vU| \dd x \dd t \\ & \quad 
\leq \delta \varepsilon \intQtau |\vu-\vU|^2 \dd x \dd t + \frac{\varepsilon}{\delta} \int\limits_0^\tau \left(\ess\sup_\Omega |\nabla_x C|^2 \int_\Omega |\nabla_x (c-C)|^2 \dd x \right)\dd t.
\end{split}
\end{equation}

\begin{equation}\label{WSU_est4}
\begin{split}
& |\mathcal{R}_{\varepsilon,4}| \leq \varepsilon \intQtau |\Delta_x (c-C)||\vu-\vU| |\nabla_x C|  \dd x \dd t \\ & \quad 
\leq \frac{\delta \varepsilon}{2} \intQtau |\Delta_x (c-C)|^2 \dd x \dd t + \frac{\varepsilon}{2\delta} \int\limits_0^\tau \left(\ess\sup_\Omega |\nabla_x C|^2 \int_\Omega |\vu-\vU|^2 \dd x \right)\dd t,
\end{split}
\end{equation}

and finally, using the Lipschitz property of the function $F'$, 

\begin{equation}\label{WSU_est5}
\begin{split}
& \mathcal{R}_F \leq  \frac{1}{\varepsilon} \intQtau |F'(c)-F'(C)| |(c_t + \vu \cdot \nabla_x c) - (C_t + \vU \cdot \nabla_x C)| \dd x \dd t \\ & \quad 
\leq \frac{\delta}{2\varepsilon} \intQtau |(c_t + \vu \cdot \nabla_x c) - (C_t + \vU \cdot \nabla_x C)|^2  \dd x \dd t + \frac{L_{F'}}{2\delta \varepsilon} \intQtau |c-C|^2 \dd x  \dd t.  
\end{split}
\end{equation}

In the last-but-one step, we use the equations (\ref{EQconc}), (\ref{WF_conc}) to treat the term 

\begin{equation}\label{estLaplace}
\begin{split}
& \frac{\delta \varepsilon}{2} \intQtau |\Delta_x (c-C)|^2 \dd x\dd t \\  & \quad \leq \frac{\delta}{\varepsilon}\intQtau \left|\varepsilon \Delta_x (c-C) - \frac{1}{\varepsilon}(F'(c)-F'(C))\right|^2 \dd x\dd t + \frac{\delta}{\varepsilon} \intQtau \left|\frac{1}{\varepsilon}(F'(c)-F'(C))\right|^2 \dd x\dd t \\ & \quad \leq \frac{\delta}{\varepsilon}  \intQtau |(c_t + \vu \cdot \nabla_x c) - (C_t + \vU \cdot \nabla_x C)|^2 \dd x \dd t + \frac{\delta L^2_{F'}}{\varepsilon^3} \intQtau |c-C|^2.
\end{split}
\end{equation}

Finally, we collect all terms on the right hand sides of (\ref{WSU_est1}--\ref{WSU_est5}) and apply (\ref{estLaplace}) and also Lemma \ref{L:poinc}. Then, clearly, with a proper choice of $\delta$, one gets the inequality of the Gronwall type (\ref{Gw_scheme}). Applying the Gronwall's inequality yields $\mathcal{E}(\vu, c | \vU, C)(t) = 0$ for almost all $t \in [0,T^\star)$, which concludes the proof. 
\end{proof}

\bibliographystyle{abbrv}
\bibliography{wsu_nsac_bib}

\end{document}